\def\sqr#1#2{{\vcenter{\vbox{\hrule height.#2pt
              \hbox{\vrule width.#2pt height#1pt \kern#1pt \vrule width.#2pt}
              \hrule height.#2pt}}}}
\def\3n{\negthinspace \negthinspace \negthinspace }
\def\2n{\negthinspace \negthinspace }
\def\1n{\negthinspace }
\def\dbC{\mathbb{C}}
\def\dbE{\mathbb{E}}
\def\dbF{\mathbb{F}}
\def\dbH{\mathbb{H}}
\def\dbN{\mathbb{N}}
\def\dbP{\mathbb{P}}
\def\dbR{\mathbb{R}}
\def\dbS{\mathbb{S}}
\def\sD{\mathscr{D}}
\def\={\buildrel \triangle \over =}
\def\ds{\displaystyle}
\def\ns{\noalign{\ss}}
\def\a{\alpha}
\def\b{\beta}
\def\d{\delta}
\def\si{\sigma}
\def\f{\varphi}
\def\o{\omega}
\def\i{\infty}
\def\Th{\Theta}
\def\F{\Phi}
\def\O{\Omega}
\def\cA{{\cal A}}
\def\cF{{\cal F}}
\def\cL{{\cal L}}
\def\cS{{\cal S}}
\def\cU{{\cal U}}
\def\no{\noindent}
\def\ss{\smallskip}
\def\ms{\medskip}
\def\q{\quad}
\def\qq{\qquad}
\def\limsup{\mathop{\overline{\rm lim}}}
\def\liminf{\mathop{\underline{\rm lim}}}
\def\lan{\mathop{\langle}}
\def\ran{\mathop{\rangle}}
\def\argmin{\mathop{\rm argmin}}
\def\essinf{\mathop{\rm essinf}}
\def\cd{\cdot}
\def\ae{\hbox{\rm a.e.{ }}}
\def\tr{\hbox{\rm tr$\,$}}
\def\les{\leqslant}
\def\ges{\geqslant}
\def\({\Big (}
\def\){\Big )}
\def\[{\Big[}
\def\]{\Big]}
\def\bde{\begin{definition}}
\def\ede{\end{definition}}
\def\be{\begin{equation}}
\def\bel{\begin{equation}\label}
\def\ee{\end{equation}}
\def\bt{\begin{theorem}}
\def\et{\end{theorem}}
\def\bc{\begin{corollary}}
\def\ec{\end{corollary}}
\def\bl{\begin{lemma}}
\def\el{\end{lemma}}
\def\bp{\begin{proposition}}
\def\ep{\end{proposition}}
\def\bas{\begin{assumption}}
\def\eas{\end{assumption}}
\def\br{\begin{remark}}
\def\er{\end{remark}}
\def\ba{\begin{array}}
\def\ea{\end{array}}
\def\ed{\end{document}}
\def\square#1{\vbox{\hrule\hbox{\vrule height#1%
     \kern#1\vrule}\hrule}}
\def\rectangle#1#2{\vbox{\hrule\hbox{\vrule height#1%
     \kern#2\vrule}\hrule}}
\font\tenbb=msbm10 \font\sevenbb=msbm7 \font\fivebb=msbm5
\newtheorem{lemma}{Lemma}[section]
\newtheorem{remark}{Remark}[section]
\newtheorem{example}{Example}[section]
\newtheorem{theorem}{Theorem}[section]
\newtheorem{corollary}{Corollary}[section]
\newtheorem{definition}{Definition}[section]
\newtheorem{proposition}{Proposition}[section]
\newtheorem{assumption}{Assumption}[section]
\begin{document}
\title{\bf Stochastic Verification Theorems  for Stochastic Control Problems of Reflected FBSDEs\thanks{This work is supported  by NSF of
P.R.China (No. 11971099) and  NSF of Jilin Province for Outstanding Young Talents (No. 20230101365JC).
  }}

\author{Lu Liu\footnote{School of Mathematics and Statistics, Northeast Normal University, Changchun 130024, China; email: {\tt liulu@nenu.edu.cn}}
 \q\qq Xinlei Hu\footnote{School of Mathematics and Statistics, Northeast Normal University, Changchun 130024, China; email: {\tt huxl302@nenu.edu.cn}}
 \q\qq Qingmeng Wei\footnote{Corresponding author. School of Mathematics and Statistics, Northeast Normal University, Changchun 130024, China; email: {\tt weiqm100@nenu.edu.cn}}
 }

\maketitle

\begin{abstract}
In this paper,   the stochastic verification theorems for stochastic control problems of reflected forward-backward stochastic differential equations are studied.  We carry out the work  within the frameworks of   classical  and viscosity solutions.
The sufficient conditions of verifying the controls to be optimal are given.
We also construct the feedback optimal control laws  from the classical and viscosity solutions of the associated Hamilton-Jacobi-Bellman  equations with obstacles.
Finally, we apply the theoretical results in two concrete examples.
One is for the case   of   the classical solution, and the other  is for the case of the viscosity solution.
\end{abstract}
\bf Keywords. \rm Stochastic verification theorem; reflected FBSDEs;  classical solution; viscosity solution; HJB equation with obstacle; feedback control laws.
\ms

\bf AMS Mathematics subject classification. \rm 93E20; 35D40; 49K45

\section{Introduction}\label{Sec_In}

\par   Dynamic programming method  originated by Bellman in the early 1950s, is one of the   powerful tools to solve the optimal control problems. The main idea of the method is to study a family of optimal control problems with different initial times and states, and to establish the relationship among them by the associated  Hamilton-Jacobi-Bellman (HJB, for short) equations.   It  has been comprehensively applied to the deterministic and stochastic control problems, referring to Fleming, Rishel \cite{FR}, Yong, Zhou \cite{YZ} and the references therein.
With  the development of nonlinear  backward stochastic differential equations (BSDEs, for short) firstly introduced by Pardoux, Peng \cite{PP-1990},   a series of relevant stochastic control problems  spring up. While not exceptional, the method of dynamic programming principle (DPP, for short) grows rapidly and has been applied widely to these control problems, 
referring  to \cite{BL-2009,  LW,  Peng-1992, Peng-1997, WY-2008}, etc. The value functions of  these different stochastic control problems were shown to be  the solutions of the corresponding partial differential equations (PDEs, for short).
 However,
most of references omit the construction of optimal controls from the solutions of PDEs, which is actually the task of stochastic verification theorem.
Stochastic verification theorems providing the sufficient conditions of verifying the controls to be optimal by virtue of  the solutions of PDEs, are important and  indispensable, especially in engineering supervision, numerical calculations and algorithm designs.


 It is relatively easy to get the verification theorems when the solutions of PDEs are classical, i.e., smooth enough, referring to \cite{FR, YZ} for the control problems and \cite{WY} for the game problems. However, it is difficult for PDEs  to have the classical solutions, so that we have to resort to the  weak solutions.
As we see in the  references  about DPP  mentioned before, most of the frameworks  involve a kind of weak solution, i.e., the viscosity solution, which was   introduced firstly for the first-order Hamilton-Jacobi equations by Crandall, Lions \cite{CL}, and then
developed for the second order PDEs by Crandall et al.   \cite{CIL}.
 Under  the framework of viscosity solutions, Zhou et al. \cite{ZYL}, Gozzi et al. \cite{GSZ1, GSZ2} studied the stochastic verification theorem for the classical   stochastic control problem;
 Zhang \cite{Zhang} got the stochastic verification theorem for the stochastic recursive  control problem.
 Recently, Chen, L\"{u} \cite{CL-arxiv} established the stochastic verification theorem for infinite
dimensional stochastic control systems directly from   DPP of the value function without the enough smoothness.

 This work shall focus on a type of stochastic control problems of reflected forward-backward stochastic differential equations (FBSDEs, for short).
Reflected BSDE was first introduced by El Karoui et al. \cite{El-KPPQ} and developed widely in many aspects, including  DPP.
By establishing the DPP of the control or game problems,  the references \cite{BL-2009, BL-2011, WY-2008}  showed that the value functions were the viscosity solutions of the associated HJB or HJBI equations with obstacles.
In this paper, we aim to study the verification theorems of stochastic control problems of reflected FBSDEs, and construct the feedback optimal control laws from the HJB equations with obstacles.
The research will be  carried out within  the frameworks of   classical solutions and viscosity solutions.

Firstly, we  present the sufficient conditions  of the  controls to be optimal when the HJB equation with obstacle has the classical solution.
  The comparison between the BSDE which $W\big(\cd,X^{t,x;u}(\cd)\big)$  satisfies  and the  reflected BSDE of cost functional fails.
 Therefore, we convert to the comparison theorem of BSDEs by employing  the penalization sequence of the  reflected BSDEs, and further complete the proof.

For the viscosity solutions case, due to the lack of   enough smoothness,
the notions  of  second-order parabolic superdifferentials and subdifferentials are adopted to provide the smooth test functions.
Different from the    classical solutions case, some additional properties of  viscosity solutions of HJB equations with obstacles  are necessary.
The first  is the joint Lipschitz  continuous property of the viscosity solutions in $(t,x)$, the second is
  the semiconcavity  of the viscosity solutions with respect to $x$.
It is a bit  restrictive. However, it is fortunately that the   study in \cite{BHL2012}  makes  the two properties  be possible.
 Further,  two examples are presented to illustrate that  the obtained verification theorems give a way to construct an optimal control or to test whether a given admissible control  is optimal.

The  structure of our manuscript is as follows. We   formulate the control problem  and recall some known results    in Section  2.   Section 3   is about stochastic verification theorem within the framework of classical solutions.
In Section 4, we carry out the detailed  study  for the case of viscosity solutions and construct the feedback optimal control laws.
 Finally, the theoretical results are applied in  two calculable examples.
  One is for the classical solutions case,  the other is for the viscosity solutions case.

\section{Preliminaries}\label{Sec_Pre}

\par Let $(\Omega,\mathcal F,\mathbb F,\mathbb P)$ be a complete filtered probability space on which a $d$-dimensional standard Brownian
motion $B(\cdot)$ is defined, and $\mathbb F = \{ \mathcal F_t
\}_{t\ges 0}$ is its natural filtration augmented by all the
$\mathbb P$-null sets. Let $T>0$ be a given terminal time.
For any
$t\in [0,T]$, $k\ges 2$,  and  Euclidean space
$\mathbb R^n$ ($n\ges 1$), we introduce the following  spaces,
$$
\ba{ll}
\ns\ds L^k_{\cF_t}( \O;\mathbb R^n):=  \Big\{ \xi:
\Omega\rightarrow\mathbb R^n  \mid \xi \mbox{ is
 } \cF_t\mbox{-measurable,}\  \mathbb
E|\xi|^k<\infty \Big\};\qq \sD=[0,T]\times L_{\cF_t}^2(\O;\dbR^n);\\
\ns\ds\cS^k_{\mathbb F}( t,T;\mathbb R^n):=  \Big\{ \f:
\Omega\times [t,T]\rightarrow\mathbb R^n  \mid \f(\cdot) \mbox{ is
 } \mathbb F\mbox{-adapted,  continuous, and}\  \mathbb
E\Big[\sup_{r\in
[t,T]} |\f(r)|^k\Big]<\infty \Big\};\qq \qq\qq\qq\\
\ea
$$
$$
\ba{ll}
\ns\ds L^k_{\mathbb F}(t,T;\mathbb R^n) :=  \Big\{ \f:\Omega\times
[t,T]\rightarrow\mathbb R^n \mid  \f(\cdot) \mbox{ is } \mathbb
F\mbox{-progressively measurable,}\  \mbox{and  } \mathbb E\(\int_t^T |\f(r)|^2 dr\)^\frac k2
<\infty \Big\};\\
\ns\ds\cA_c^2(t,T;\dbR) :=  \Big\{ \f:\Omega\times
[t,T]\rightarrow\mathbb R\mid \f(\cdot) \mbox{ is } \mathbb
F\mbox{-adapted, continuous and increasing, }\f(t)=0,\ \dbE|\f(T)|^2<\i  \Big\};\\
\ns\ds C^{1,2}([t,T]\times\mathbb R^n):=   \Big\{ w: [t,T]\times\mathbb
R^n\rightarrow\mathbb R\mid   w(\cdot,\cdot) \mbox{ is
continuous, } w_r(\cdot,\cdot), w_x(\cdot,\cdot), w_{xx}(\cdot,\cdot) \mbox{ exist and  }\\
\ns\ds \hskip 6.6cm
\mbox{    are also continuous}\Big\}.
\ea
$$

\ms

Now we formulate the stochastic control problem.  For any $t\in[0,T]$, denote $\cU_{t,T}$  by the set of all the admissible controls on $[t,T]$, that is,
$$\ \cU_{t,T} := \Big\{ u :[t,T]\times\Omega \rightarrow U\mid  u (\cdot) \mbox{ is } \mathbb F\mbox{-progressively
measurable}\Big\},
 $$
where  $U\subseteq\dbR^m$ is the  nonempty compact  set.

For any $(t,\xi)\in\sD$ and $u(\cd)\in\cU_{t,T}$, consider the following controlled stochastic differential equation (SDE, for short),
\begin{equation}\label{SDE}
\left\{
\ba{ll}
\ns \ds\!\!\! dX(s)  = b\big(s,X(s) ,u(s)   \big) ds +\sigma\big(
s,X(s) ,u(s)    \big) dB(s)  ,\quad s\in [t,T],\\
\ns \ds\!\!\! X(t)  =\xi ,
\ea
\right.
\end{equation}
where  $u(\cd)$ is the control process, $X(\cd)$ is the controlled state process and the
coefficients
 $$\ba{ll}
 b:[0,T]\times\mathbb R^n\times U  \rightarrow
\mathbb R^n,\q  \sigma: [0,T]\times\mathbb R^n\times U  \rightarrow\mathbb R^{n\times d}, \
\ea$$
satisfy

\noindent{\bf (H1).} (i). for every fixed $ x  \in \dbR^n $, $b(\cd,x,\cd)$, $\si(\cd,x,\cd)$  are continuous in $(r,u)\in[0,T]\times U$;

 \hskip 0.58cm (ii). there exists some constant $C>0$ such that, for any
 $(r,u)\in [0,T]\times U$,   $x,x'\in\dbR^n$,
$$  |b(r,x,u)-b(r,x',u)|+ |\si(r,x,u)-\si(r,x',u)|\les C|x-x'|.
$$
 Obviously, under {\bf (H1)},
for any $k\ges 2$,  $\xi\in L^k_{\mathcal F_t}(\Omega;\mathbb R^n)$,  \eqref{SDE} admits the unique $\dbF$-adpated solution $X(\cd)\equiv X^{t,\xi;u}(\cd)\in \cS^k_{\mathbb F}(t,T;\mathbb R^n)$.
And for any $t\in[0,T]$,  $\xi,\xi'\in L^k_{\mathcal F_t}(\Omega;\mathbb R^n)$, $0\les h\les T-t$,  there exist  some constants $C>0$ such that the following estimates hold, $P$-a.s.,
\bel{est888}\ba{ll}
\ns\ds\dbE_t \[\sup\limits_{r\in[t,T]}|X^{t,\xi;u}(r) |^k\] \les C(1+|\xi|^k),\\
\ns\ds\dbE_t \[\sup\limits_{r\in[t,t+h]}|X^{t,\xi;u}(r) -\xi|^k\] \les Ch(1+|\xi|^k),\\
\ns\ds \dbE_t\[\sup\limits_{r\in[t,T]}|X^{t,\xi;u}(r) -X^{t,\xi';u}(r) |^k\]\les C  |\xi-\xi'|^k,
\ea\ee
where $\dbE_t[\cd]:=\dbE[\cd\mid\cF_t]$ for simplicity. The  details of \eqref{est888} can be referred to \cite{YZ}.

For any $(t,x)\in[0,T]\times\dbR^n$, we call  $(u(\cd), X^{t,x;u}(\cd))\in\cU_{t,T}\times  \cS^2_{\mathbb F}(t,T;\mathbb R^n)$  the admissible pair, $X^{t,x;u}(\cd)$ an admissible state process.

 \ms

Next, to introduce the  cost functional, we consider  the following reflected BSDE, for any $(t,\xi)\in\sD$ and $u(\cd)\in\cU_{t,T}$,
\bel{RBSDE}
\left\{\ba{ll}
 \ns\ds \!\!\!\! {\rm(i).} \ \! (Y(\cd),Z(\cd),K(\cd))\in\cS_\dbF^2(t,T;\dbR)\times L_\dbF^2(t,T;\dbR^d)\times \cA_c^2(t,T;\dbR);\\
 \ns\ds\!\!\!\!  {\rm(ii).} \ \! Y(s)\!=\!\F\big(X(T)\big)\!+\!\int_s^T  \!\!\! f\big(r,X(r),Y(r),Z(r),u(r)\big)dr\!-\!\big(K(T)\!-\!K(s)\big)\!-\!\int_s^T\!\!\!Z(r)dB(r),\ \! s\in [t,T];\\
 \ns\ds\!\!\!\! {\rm(iii).} \ \! Y(s)\les h\big(s,X(s)\big), \mbox{ a.e. } s\in[t,T];\\
 \ns\ds\!\!\!\! {\rm(iv).} \ \! \int_t^T\(h\big(s,X(s)\big)-Y(s)\)dK(s)=0 ,
\ea\right.\ee
where $X(\cd)$ satisfies  \eqref{SDE}, the driver $f:[0,T]\times\dbR^n\times \dbR\times\dbR^d\times U\to\dbR$, the terminal condition $\F:\dbR^n\to\dbR$ and the obstacle term $h:[0,T]\times\dbR^n\to\dbR$ are assumed to satisfy
\ss

\noindent{\bf (H2).} (i). for every $(x,y,z )\in\dbR^n\times \dbR\times \dbR^d $,  $f(\cd,x,y,z, \cd)$ is continuous in $(r,u)\in[0,T]\times\dbR^n$, $h(\cd,x)$ is continuous in $t\in[0,T]$;

 \hskip 0.71 cm (ii). there exist some constants $C>0$ such that,  for any
 $(r,u)\in [0,T]\times
U$, $x$, $x'\in\dbR^n$, $y$, $y'\in\dbR$, $z$, $z'\in\dbR^d$,
$$\ba{ll}
\ns\ds |f(r,x,y,z,u)-f(r,x',y',z',u)|\les C\big(|x-x'|+|y-y'|+|z-z'|\big),\\
\ns\ds |\Phi(x)-\Phi(x')|\les C|x-x'|,\qq  |h(r,x)-h(r,x')|\les C|x-x'|;
\ea$$

\hskip 0.71cm   (iii).  for any $x\in\dbR^n$, $ \F(x)\les h(T,x)$.
\ms

According to the theory of reflected BSDEs (\cite{El-KPPQ, BL-2011}),   {\bf (H1)} and {\bf (H2)} guarantee that, for any $(t,\xi)\in\sD$, $u(\cd)\in\cU_{t,T}$,  there exists  the unique triple of   $(Y(\cd),Z(\cd),K(\cd))\equiv(Y^{t,\xi;u}(\cd),Z^{t,\xi;u}(\cd),K^{t,\xi;u}(\cd))$ satisfying \eqref{RBSDE}.
Now, for any initial pair $(t,x)\in[0,T]\times\dbR^n$ and the admissble control $u(\cd)\in\cU_{t,T}$, we can define
\bel{cost}
J(t,x;u(\cd)):=Y^{t,x;u}(t),
\ee
 which is the cost functional of our control problem. Note that, for $(t,\xi)\in\sD$ and $u(\cd)\in\cU_{t,T}$, we also have
$
J(t,\xi;u(\cd))=J(t,x;u(\cd)) |_{x=\xi} =Y^{t,\xi;u}(t), \  \dbP\mbox{-a.s.}, $  referring to \cite{BL-2009, BL-2011, WY-2008}.

\ms

 Based on the above preparation, now  we can formulate the control problem as follows,

\no {\bf Problem (RC)}  {\sl For any $x\in\dbR^n$, find $\bar u(\cd)\in\cU_{0,T}$ such that
  \bel{OPC-0}
  J(0,x;\bar u(\cd))=\essinf_{u(\cd)\in \cU_{0,T}}J(0,x;u(\cd)).
  \ee
  $\bar u(\cd)$ satisfying \eqref{OPC-0} is said to be the optimal control of Problem (RC),  the corresponding $\bar X(\cd)=X^{0,x;\bar u}(\cd)$ is the optimal state process.
}

\ss

To get the optimal control, we need to study the following family of control problems parameterized by the different initial pairs $(t,x)\in[0,T]\times\dbR^n$.
\ss

\no {\bf Problem (RC)$_{t,x}$}  {\sl For any $(t,x)\in[0,T]\times\dbR^n$, find $\bar u(\cd)\in\cU_{t,T}$ such that
  \bel{OPC}
  J(t,x;\bar u(\cd))=\essinf_{u(\cd)\in \cU_{t,T}}J(t,x;u(\cd)):=V(t,x).
  \ee
 In the above, $\bar u(\cd)$ satisfying \eqref{OPC} is called  as the optimal control of Problem (RC)$_{t,x}$,  the corresponding $\bar X(\cd)=X^{t,x;\bar u}(\cd)$ is the optimal state process. We call $(\bar X(\cd),\bar u(\cd)) $ as the optimal pair of Problem (RC)$_{t,x}$, and $V:[0,T]\times\dbR^n\to\dbR$ as  the value function of  Problem (RC)$_{t,x}$.
}

\ss

\ss

By \cite{BL-2009, BL-2011, WY-2008}, the value function $V(\cd,\cd)$ possesses the following properties.

\bl\label{Le-con}\sl Under  {\bf (H1)} and {\bf (H2)}, $V(\cd,\cd)$ is  deterministic,
 Lipschitz continuous and linear growth  in $x\in\dbR^n$, and   $\frac 12$-H\"{o}lder continuous in  $t\in[0,T]$.

\el

In addition, the value function $V(\cd,\cd)$  can solve   the  following HJB equation  with obstacle   in some sense,
%
%
 \bel{HJB}
\left\{
\ba{ll}
\ns\ds\!\!\!\! \max\Big\{ \! W(t,x)-h(t,x),- \frac{\partial}{\partial t}W(t,x) \!-\!\inf_{u\in U}  \mathbb H \big(
t,x,W(t,x),W_x(t,x), W_{xx}(t,x),u \big)  \Big\}=0,\\
\ns\ds\hskip 10.5cm
(t,x)\in[0,T]\times\dbR^n,\\
\ns\ds\!\!\!\! W(T,x) = \Phi(x),\quad x\in \mathbb R^n,
\ea
\right.
\ee
where
\bel{Hami}\left\{\ba{ll}
\ns\ds\!\!\! \mathbb H(r,x, y,p,P,u) :=  \tr
[a(r,x,u)P] +p_\cdot b(r,x,u)  +f\big(r,x,y,p_\cdot\si(r,x,u), u \big),\\
\ns\ds\!\!\! a(r,x,u):= \frac 1 2
\sigma(r,x, u)\sigma(r,x,u)^\top,\q     (r,x,y,p,P,u)\in
[0,T]\times \dbR^n\times\dbR\times \dbR^n\times \dbS^n\times U. \ea\right.\ee
Here  $\dbS^n$ is the set of all the  $n\times n $ symmetric matrices.
%
%
 Precisely,

\bl\label{Le-vis}\sl Under  {\bf (H1)} and {\bf (H2)}, $V(\cd,\cd)$  is  the unique viscosity solution (unique in $C_p([0,T]\times\dbR^n)$) of the HJB equation  with obstacle \eqref{HJB}, where $C_p([0,T]\times\dbR^n)$ is the space of continuous real functions over $[0,T]\times\dbR^n$ which have polynomial growth.

\el

The above two results are classical, so that we will not repeat the details, which  can be referred to \cite{BHL2012, WY-2008}, including the definition of viscosity solutions.

Before ending this section,  we introduce the   following definition of admissible feedback control laws which will be needed later.

\begin{definition}\sl   A measurable mapping $\mathbbm{u}:[t,T]\times\mathbb R^n \rightarrow U$ is called  an {\it admissible  feedback control law}, if for any
	$(t,x)\in[0,T]\times  \mathbb R^n $,    the following
	\bel{SDE-u}
	\left\{
	\ba{ll}
	\ns\ds\!\!\!   dX^{t,x;\mathbbm{u}}(s)  = b\big ( r,X^{t,x;\mathbbm{u}}(r) ,\mathbbm{u}(r,
	X^{t,x;\mathbbm{u}}(r) )   \big)dr + \sigma\big( r,X^{t,x;\mathbbm{u}}(r) ,\mathbbm{u}(r,X^{t,x;\mathbbm{u}}(r))   \big)dB(r)  , \ s\in[t,T],\\
	\ns\ds\!\!\!  X^{t,x;\mathbbm{u}}(t)  = x,
	\ea
	\right.
	\ee
	and
	\bel{RBSDE-u}
	\left\{\ba{ll}
	\ns\ds \!\!\! {\rm(i).} \  (Y^{t,x;\mathbbm{u}}(\cd),Z^{t,x;\mathbbm{u}}(\cd),K^{t,x;\mathbbm{u}}(\cd))\in\cS_\dbF^2(t,T;\dbR)\times L_\dbF^2(t,T;\dbR^d)\times \cA_c^2(t,T;\dbR);\\
	\ns\ds\!\!\!  {\rm(ii).} \ Y^{t,x;\mathbbm{u}}(s)=\F\big(X^{t,x;\mathbbm{u}}(T)\big)\!+\!\int_s^T f\big(r,X^{t,x;\mathbbm{u}}(r),Y^{t,x;\mathbbm{u}}(r),Z^{t,x;\mathbbm{u}}(r),\mathbbm{u}(r,X^{t,x;\mathbbm{u}}(r)) \big)dr\\
	\ns\ds\qq\qq\qq\q \!-\! \big(K^{t,x;\mathbbm{u}}(T)-K^{t,x;\mathbbm{u}}(s)\big)\!-\!\int_s^TZ^{t,x;\mathbbm{u}}_rdB(r),\q  s\in [t,T];\\
	\ns\ds\!\!\! {\rm(iii).} \ Y^{t,x;\mathbbm{u}}(s)\les h\big(s,X^{t,x;\mathbbm{u}}(s)\big), \mbox{ a.e. } s\in[t,T];\\
	\ns\ds\!\!\! {\rm(iv).} \ \int_t^T\Big(h\big(r,X^{t,x;\mathbbm{u}}(r)\big)-Y^{t,x;\mathbbm{u}}(r)\Big)dK^{t,x;\mathbbm{u}}(r)=0,
	\ea\right.\ee
	admit  the unique adapted  solutions $ X^{t,x;\mathbbm{u}}(\cd)$ and $(Y^{t,x;\mathbbm{u}}(\cd),Z^{t,x;\mathbbm{u}}(\cd),K^{t,x;\mathbbm{u}}(\cd))$, respectively.

\end{definition}

Note that, the outcome  $\mathbbm{u}(\cdot,X^{t,x;\mathbbm{u}}(\cd))$ of the admissible  feedback control law  $\mathbbm{u}(\cd,\cd)$ is still our admissible control, i.e.,
$\mathbbm{u}(\cdot,X^{t,x;\mathbbm{u}}(\cd)) \in
\cU_{t,T}$.

\section{Stochastic Verification Theorem: Classical Solutions}\label{Sec_SVP-C}
 \par  In the section, we try to construct the optimal control of Problem (RC)$_{t,x}$  from the classical solution of HJB equation  with obstacle \eqref{HJB}.
 For this, we strengthen  the continuity conditions of the coefficients $b$, $\si$ and  $f$ on the control variable $u$ as follows,

\ss

 \no{\bf(H3).} for every  $ (r,x,y,z) \in [0,T]\times\dbR^n\times \dbR \times \dbR^d $, $b(r,x,\cd)$, $\si(r,x,\cd)$, $f(r,x,y,z,\cd)$ are Lipschitz continuous in $u\in U $.

\ms
Further, denote $\mathcal{L}$ by the class of measurable mappings $\mathbbm{u}:[0,T]\times\mathbb R^n \rightarrow U$ with the following properties,
%
 $$\left\{\ba{ll}
 \ns\ds\!\!\!\!\!  \mbox{ (i). for every fixed } x  \in \dbR^n, u(\cd,x) \mbox{ is continuous in } r\in[0,T];\\

 \ns\ds\!\!\!\!\!  \mbox{ (ii). there exists some constant } C>0 \mbox{  such that, for any } r\in [0,T],   x,x'\in\dbR^n, \\
 \ns\ds \hskip0.6cm  |u(r,x)-u(r,x')|\les C|x-x'|.
\ea\right.
$$
 Note that, under $\mathbf{(H1)}$-$\mathbf{(H3)}$,  $\mathbbm{u}(\cd,\cd)\in\mathcal{L}$ is the admissible  feedback control law.p
For any  $(r,x,y,p,P)\in [0,T]\times \dbR^n\times\dbR\times\dbR^n\times\dbS^n$, we introduce the  mapping    $\psi:[0,T]\times \dbR^n\times\dbR\times\dbR^n\times\dbS^n\to U$  such that
 $$\ba{ll}
 \ns\ds \psi(r,x,y,p,P)\in\argmin\dbH(r,x,y,p,P,\cd)\equiv\Big\{\bar u\in U \mid \dbH(r,x,y,p,P,\bar u)=\min_{u\in U}\dbH(r,x,y,p,P,u)\Big\} .
 \ea$$

Now we present the  first main result.

	 \begin{theorem}\label{SVT-class}\sl
	 	Assume $\mathbf{(H1)}$-$\mathbf{(H3)}$. Let $W( \cdot , \cdot ) \in {C^{1,2}}([0,T] \times {\mathbb R^n})$ be the classical solution of the HJB equation with obstacle \eqref{HJB}.
 	Then

 {\rm(i).}
  	for any $(t,x) \in [0,T] \times {\mathbb R^n}$ and $u(\cd) \in \cU_{t,T}$, we have
 		$ W(t,x) \les J(t,x;u(\cd));$

 {\rm (ii).} for any $(t,x)\in[0,T]\times\dbR^n$, defining $\bar{\mathbbm{u}}:[t,T]\times\dbR^n\to U  $ as
 \bel{OC-C}
  \bar{\mathbbm{u}}(s,y)=\psi(s,y,W(s,y),W_{x}(s,y),W_{xx}(s,y)),\q (s,y)\in[t,T]\times\dbR^n,
 \ee
%
%
if $\bar{\mathbbm{u}} (\cd,\cd)\in\cL$, then $\bar{\mathbbm{u}}\big(\cd,X^{t,x;\bar{\mathbbm{u}}}(\cd)\big)$ is the optimal control of Problem (RC)$_{t,x}$, where $X^{t,x;\bar{\mathbbm{u}}}(\cd)$ satisfies
\eqref{SDE-u} with $\bar {\mathbbm{u}}(\cd,\cd).$
In this case,  $W(\cd,\cd)$ is indeed the value function $V(\cd,\cd)$, i.e.,
  $$W(t,x)=J\big(t,x;\bar{\mathbbm{u}}(\cd,X^{t,x;\bar{\mathbbm{u}}}(\cd))\big)=V(t,x),\q (t,x)\in[0,T]\times\dbR^n.$$

\end{theorem}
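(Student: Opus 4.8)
The plan is to compare, along any admissible state trajectory, the backward process $Y^{t,x;u}(\cd)$ that defines the cost functional with the deterministic process $s\mapsto W(s,X^{t,x;u}(s))$, using the HJB inequality and a comparison argument. First I would fix $(t,x)\in[0,T]\times\dbR^n$ and an arbitrary $u(\cd)\in\cU_{t,T}$, write $X(\cd)=X^{t,x;u}(\cd)$, and apply It\^o's formula to $W(s,X(s))$ on $[t,T]$. Since $W\in C^{1,2}$ solves \eqref{HJB}, for every control value $u$ one has $-W_t(s,X(s))-\dbH(s,X(s),W(s,X(s)),W_x(s,X(s)),W_{xx}(s,X(s)),u(s))\les 0$ and also $W(s,X(s))\les h(s,X(s))$. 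Setting $\widehat Y(s):=W(s,X(s))$, $\widehat Z(s):=W_x(s,X(s))\si(s,X(s),u(s))$, the It\^o expansion shows that $\widehat Y(\cd)$ satisfies a BSDE on $[t,T]$ with terminal value $\F(X(T))$, driver $f(s,X(s),\widehat Y(s),\widehat Z(s),u(s))+g(s)$ and \emph{no} reflection term, where $g(s):=-W_t-\dbH(\cdots,u(s))+$ (the driver $f$ term is absorbed by the definition of $\dbH$) $\ges 0$ is nonnegative by the HJB inequality; more precisely $g(s)=-\,\bigl[W_t(s,X(s))+\dbH(s,X(s),\widehat Y(s),W_x(s,X(s)),W_{xx}(s,X(s)),u(s))\bigr]\ge0$.

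The difficulty flagged in the introduction is that one cannot directly invoke a comparison theorem between a plain BSDE (satisfied by $\widehat Y$) and the reflected BSDE \eqref{RBSDE} satisfied by $Y^{t,x;u}$. To get around this I would use the penalization approximation: let $(Y^{(k)},Z^{(k)})$ solve the penalized BSDE with driver $f(s,X(s),y,z,u(s))+k\,(y-h(s,X(s)))^{-}$ and terminal value $\F(X(T))$; by El Karoui et al. \cite{El-KPPQ}, $Y^{(k)}(s)\uparrow Y^{t,x;u}(s)$ as $k\to\infty$. Now $\widehat Y$ solves a BSDE whose driver dominates the penalized driver: indeed $f(s,X,\widehat Y,\widehat Z,u)+g(s)\ges f(s,X,\widehat Y,\widehat Z,u)+k(\widehat Y-h(s,X(s)))^{-}$ because $g\ge0$ while the penalty term is $0$ (as $\widehat Y=W(s,X(s))\le h(s,X(s))$ forces $(\widehat Y-h)^{-}=0$). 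Hence the standard comparison theorem for BSDEs gives $\widehat Y(s)\ges Y^{(k)}(s)$ for all $k$, and letting $k\to\infty$ yields $W(s,X(s))=\widehat Y(s)\ges Y^{t,x;u}(s)$ on $[t,T]$; taking $s=t$ gives $W(t,x)\ges\cdots$ — wait, that is the wrong direction, so in fact one must track the sign of $g$ carefully: with the HJB written as a $\max$ that equals zero, $-W_t-\inf_u\dbH\le0$ gives $-W_t-\dbH(\cdots,u)\le0$, i.e. $g(s)\le0$, and the penalty comparison then delivers $\widehat Y\le Y^{(k)}\le Y^{t,x;u}$, hence $W(t,x)\le J(t,x;u(\cd))$, which is exactly part (i).

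For part (ii), I would repeat the computation with the feedback control $\bar{\mathbbm u}$, noting that along $\bar X(\cd)=X^{t,x;\bar{\mathbbm u}}(\cd)$ the choice $\psi\in\argmin_u\dbH$ turns the HJB inequality into an \emph{equality}: either $W(s,\bar X(s))<h(s,\bar X(s))$, in which case $-W_t-\dbH(\cdots,\bar{\mathbbm u})=0$ so $g\equiv0$ there, or $W(s,\bar X(s))=h(s,\bar X(s))$, in which case the obstacle is active and the nondecreasing process $\bar K$ can be identified with the local-time-type term coming from the constraint $W\le h$. The key technical point is to show that $(\,W(\cd,\bar X(\cd)),\,W_x(\cd,\bar X(\cd))\si(\cd,\bar X(\cd),\bar{\mathbbm u}),\,\bar K(\cd)\,)$ actually solves the \emph{reflected} BSDE \eqref{RBSDE-u}, i.e. verifies the Skorokhod flat-off condition $\int_t^T(h(s,\bar X(s))-W(s,\bar X(s)))\,d\bar K(s)=0$; this follows because $d\bar K$ charges only the set $\{W(\cdot,\bar X)=h(\cdot,\bar X)\}$ by the structure of the HJB $\max$. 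By uniqueness of the solution to \eqref{RBSDE-u} we conclude $Y^{t,x;\bar{\mathbbm u}}(t)=W(t,x)$, so $J(t,x;\bar{\mathbbm u}(\cd,\bar X(\cd)))=W(t,x)\le J(t,x;u(\cd))$ for every competitor $u$ by part (i), which is optimality, and simultaneously identifies $W=V$. The main obstacle I anticipate is making the penalization/comparison step in (i) rigorous — in particular controlling the penalty term against the sign of $g$ and justifying the passage $k\to\infty$ inside the comparison — together with the careful verification of the flat-off condition in (ii); the regularity hypotheses $\mathbf{(H3)}$ and $\bar{\mathbbm u}\in\cL$ are exactly what is needed to guarantee that \eqref{SDE-u}–\eqref{RBSDE-u} are well posed so that the uniqueness argument applies.
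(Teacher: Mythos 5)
Your overall strategy for part (i) --- It\^o's formula applied to $W(\cdot,X^{t,x;u}(\cdot))$, the HJB inequality to control the resulting driver, and a penalization/comparison argument to get around the impossibility of directly comparing a plain BSDE with a reflected one --- is exactly the paper's route. But there is a concrete error in the penalization step that breaks your chain of inequalities. The reflected BSDE \eqref{RBSDE} is an \emph{upper}-obstacle problem: $Y\les h(\cdot,X(\cdot))$ and the increasing process $K$ enters with a minus sign. Its penalized approximation is therefore the BSDE with driver $f-n(y-h)^{+}$, and the solutions ${}^{n}Y$ \emph{decrease} to $Y^{t,x;u}$ (this is \eqref{Yn} in the paper). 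You instead use the lower-obstacle convention $f+k(y-h)^{-}$ with $Y^{(k)}\uparrow$, and you assert $(\widehat Y-h)^{-}=0$ from $W\les h$ --- but $W\les h$ gives $(W-h)^{+}=0$, while $(W-h)^{-}=h-W$ need not vanish. More seriously, for your penalization the extra driver term $k(y-h)^{-}\ges 0$ pushes solutions \emph{up}, so $Y^{(k)}\ges Y^{(0)}\ges Y^{t,x;u}$ (the latter because reflection from above only lowers $Y$); hence the second inequality in your claimed chain $\widehat Y\les Y^{(k)}\les Y^{t,x;u}$ is false, and $\widehat Y\les Y^{(k)}$ with $Y^{(k)}$ increasing to something \emph{above} $Y^{t,x;u}$ gives no information. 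The repair is exactly the paper's: compare $\widehat Y$ (driver $f+g$ with $g\les 0$ by the HJB $\max$, while $-n(\widehat Y-h)^{+}=0$ since $W\les h$) with ${}^{n}Y$ to get $W(s,X^{t,x;u}(s))\les{}^{n}Y(s)$ for every $n$, then let $n\to\infty$ using ${}^{n}Y\downarrow Y^{t,x;u}$.

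For part (ii), your route (identify $\big(W(\cdot,\bar X(\cdot)),\,W_x\sigma,\,\bar K\big)$ as the solution of \eqref{RBSDE-u} by verifying the Skorokhod flat-off condition and invoking uniqueness) differs from the paper's, which splits on whether $W(t,x)=h(t,x)$ at the initial point: in that case $W(t,x)=h(t,x)\ges Y^{t,x;\bar{\mathbbm{u}}}(t)$ follows at once from the obstacle constraint in \eqref{RBSDE-u}, and otherwise it reruns the penalized comparison with the $\argmin$ choice turning the driver inequality into an equality, so that $W(\cdot,\bar X(\cdot))={}^{n}Y^{t,x;\bar{\mathbbm{u}}}(\cdot)$ for every $n$ and hence $=Y^{t,x;\bar{\mathbbm{u}}}$ in the limit. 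Your step ``$d\bar K$ charges only the set $\{W=h\}$'' is asserted, not proved; it is the whole content of the flat-off condition, and you have not actually constructed the process $\bar K$ whose support you are describing. So in (ii) you would still need to either supply that construction or fall back on the paper's penalization argument, which avoids producing $\bar K$ altogether.
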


 	 \begin{proof}
 	 	{\rm (i).}
For any $(t,x) \in [0,T] \times {\mathbb R^n},$ $u(\cd)\in \cU_{t,T}$,	applying It\^o's formula to $W(\cd,X^{t,x;u}(\cd))$ and using \eqref{Hami}, we get
 	 	\bel{BSDE-W}\ba{ll}
       \ns\ds  W\big(s,X^{t,x;u}(s)\big)
      = \Phi \big(X^{t,x;u}(T)\big) - \int_s^T  W_x\big(r,X^{t,x;u}(r)\big). \sigma\big (r,X^{t,x;u}(r),u(r)\big)  dB(r)\\
       \ns\ds  \!+\! \int_s^T\! \!\[f\big(r,X^{t,x;u}(r),W\big(r,X^{t,x;u}(r)\big),W_x\big(r,X^{t,x;u}(r)\big). \sigma (r,X^{t,x;u}(r),u(r)), u(r) \big)\!- \!\frac{\partial }{{\partial r}}W\big(r,X^{t,x;u}(r)\big) \!\!\!\!\!\\
       \ns\ds \hskip1.1cm - \mathbb H\big(r,X^{t,x;u}(r),W\big(r,X^{t,x;u}(r)\big),W_x\big(r,X^{t,x;u}(r)\big),W_{xx}(r,X^{t,x;u}(r)\big),u(r)\big)\]dr,\q s\in [t,T].
 	 	\ea\ee
 %
%
%
 On the other hand, for any  $ (t,x) \in [0,T] \times \mathbb R^n$, $n \in \mathbf N^*$, we consider the following BSDEs,
\bel{Yn}\ba{ll}
 \ns\ds  {}^nY^{t,x;u}(s) = \Phi \big(X^{t,x;u}(T)\big) + \int_s^T f\big(r, X^{t,x;u}(r),{}^nY^{t,x;u}(r),{}^nZ^{t,x;u}(r),u(r)\big)dr\\
\ns\ds \hskip2cm - n\int_s^T \Big( {}^nY^{t,x;u}(r) - h(r,X^{t,x;u}(r)\big)\Big)^ + dr - \int_s^T {{}^nZ^{t,x;u}(r)}  dB(r),\quad s\in[t,T].
\ea\ee
By the fact that $W(\cd,\cd)$ being the classical solution  of \eqref{HJB}, we get the following two cases,

 {\rm Case (a).} at any point $(t,x)\in [0,T]\times\dbR^n$ where $(W-h)(t,x)= 0$,
 $$- \frac{\partial}{\partial t}W(t,x) -\inf_{u\in U}  \mathbb H \big(
t,x,W(t,x),W_x(t,x), W_{xx}(t,x),u \big) \les 0;$$

  {\rm Case (b).} at any point $(t,x)\in [0,T]\times\dbR^n$ where $(W-h)(t,x)< 0$,
 $$- \frac{\partial}{\partial t}W(t,x) -\inf_{u\in U}  \mathbb H \big(
t,x,W(t,x),W_x(t,x), W_{xx}(t,x),u \big) = 0.$$
No matter (a)  or  (b),   for any $r\in[t,T]$ and $u(\cd)\in \cU_{t,T}$, $n \in \mathbf N^*$, we have
 \bel{compar-f}\ba{ll}
 \ns\ds
 f\big(r,X^{t,x;u}(r),W\big(r,X^{t,x;u}(r)\big),W_x\big(r,X^{t,x;u}(r)\big). \sigma (r,X^{t,x;u}(r),u(r)),u(r)\big) - \frac{\partial }{{\partial r}}W\big(r,X^{t,x;u}(r)\big)\\
\ns\ds
 - \mathbb H\big(r,X^{t,x;u}(r),W\big(r,X^{t,x;u}(r)\big),W_x\big(r,X^{t,x;u}(r)\big),W_{xx}(r,X^{t,x;u}(r)\big),u(r)\big)\\
\ns\ds  \les f\big(r,X^{t,x;u}(r),W\big(r,X^{t,x;u}(r)\big),W_x\big(r,X^{t,x;u}(r)\big).\sigma (r,X^{t,x;u}(r),u(r)),u(r)\big)- \frac{\partial }{{\partial r}}W\big(r,X^{t,x;u}(r)\big)\\
\ns\ds\q
 	- \mathop {\inf }\limits_{u \in U}\mathbb H\big(r,X^{t,x;u}(r),W\big(r,X^{t,x;u}(r)\big),W_x\big(r,X^{t,x;u}(r)\big),W_{xx}(r,X^{t,x;u}(r)\big),u\big)\\
%
 %
 \ns\ds \les f\big(r,X^{t,x;u}(r),W\big(r,X^{t,x;u}(r)\big),W_x\big(r,X^{t,x;u}(r)\big) . \sigma (r,X^{t,x;u}(r),u(r)),u(r)\big)\\
 \ns\ds\q -n\(W\big(r,X^{t,x;u}(r)\big) - h\big(r,X^{t,x;u}(r)\big)\)^ + .
\ea\ee
Therefore, by using the comparison theorem of BSDEs to \eqref{BSDE-W} and \eqref{Yn}, for  all $n \in {\dbN^*}$,  we get
\bel{W-Yn}W \big(s,X^{t,x;u}(s)\big) \les {}^n{Y^{t,x;u}}(s),\quad  s \in [t,T],\q \dbP\mbox{-a.s.}\ee

Further, according to the penalized method proving the wellposedness  of reflected BSDEs (referring to \cite{El-KPPQ, BL-2009}),   we know, as $n\to\i$,
$ {}^n{Y^{t,x;u}}(s)\downarrow  Y^{t,x;u}(s),\  s \in [t,T],\ \dbP\mbox{-a.s.}$, where   $Y^{t,x;u}(\cd)$ is the first component of the solution of reflected BSDE \eqref{RBSDE}.
So, by letting $n\to \i$ in \eqref{W-Yn},	$$W\big(s,X^{t,x;u}(s)\big) \les Y^{t,x;u}(s),\q s \in [t,T], \q \dbP\mbox{-a.s.}$$
Especially,  when $s=t$,
 	\bel{Step1}
 	 		W(t,x) \les {Y^{t,x;u}}(t) = J\big(t,x;u(\cd)\big),\quad\mathrm{for\ any}\ u(\cd) \in \cU_{t,T}.\ee

 \ms
 {\rm(ii).} Let $X^{t,x;\bar{\mathbbm{u}}}(\cd)$ and $(Y^{t,x;\bar{\mathbbm{u}}}(\cd),Z^{t,x;\bar{\mathbbm{u}}}(\cd),K^{t,x;\bar{\mathbbm{u}}}(\cd))$ be the  solutions  of SDE \eqref{SDE-u} and reflected BSDE \eqref{RBSDE-u} with $\mathbbm{u}(\cd,\cd)$ replaced by $\bar{\mathbbm{u}}(\cd,\cd)$ introduced in \eqref{OC-C}.
 %
%
 In  Case (a), for  $(t,x)\in[0,T]\times\dbR^n$ such that    $W(t,x)= h(t,x)$, combined with the obstacle condition in reflected BSDE \eqref{RBSDE-u},
   we get
 $$W(t,x)= h(t,x)\ges Y^{t,x;\bar{\mathbbm{u}}}(t)=J\big(t,x;\bar{\mathbbm{u}}(\cd,X^{t,x;\bar{\mathbbm{u}}}(\cd))\big),\q (t,x)\in[0,T]\times\dbR^n.$$
 %

  In  Case (b), applying It\^o's formula to $W(\cd,X^{t,x;\bar{\mathbbm{u}}}(\cd))$ on $[t,T]$, we have
\small $$\ba{ll}
\ns\ds  W\big(s,X^{t,x;\bar{\mathbbm{u}}}(s)\big)   = \Phi\big ({{X^{t,x;\bar{\mathbbm{u}}}(T)}}\big) - \int_s^T W_x\big(r,X^{t,x;\bar{\mathbbm{u}}}(r)\big). \sigma\big (r,X^{t,x;\bar{\mathbbm{u}}}(r), \bar{\mathbbm{u}}(r,X^{t,x;\bar{\mathbbm{u}}}(r))\big) dB(r)\\
\ns\ds+ \int_s^T \Big\{f\big(r,X^{t,x;\bar{\mathbbm{u}}}(r),W\big(r,X^{t,x;\bar{\mathbbm{u}}}(r)\big),
W_x\big(r,X^{t,x;\bar{\mathbbm{u}}}(r)\big).\sigma\big (r,X^{t,x;\bar{\mathbbm{u}}}(r),\bar{\mathbbm{u}}(r,X^{t,x;\bar{\mathbbm{u}}}(r))\big),
\bar{\mathbbm{u}}(r,X^{t,x;\bar{\mathbbm{u}}}(r))\big)\\
\ns\ds - \frac{\partial }{{\partial r}}W\big(r,X^{t,x;\bar{\mathbbm{u}}}(r)\big)- \mathbb H\big(r,X^{t,x;\bar{\mathbbm{u}}}(r),W\big(r,X^{t,x;\bar{\mathbbm{u}}}(r)\big),
W_x\big(r,X^{t,x;\bar{\mathbbm{u}}}(r)\big),
 	 			W_{xx}\big(r,X^{t,x;\bar{\mathbbm{u}}}(r)\big),\bar{\mathbbm{u}}(r,X^{t,x;\bar{\mathbbm{u}}}(r))\big)\Big\} dr.
\ea$$
\normalsize
 Note that \eqref{OC-C}  and  Case (b) make ``$\les$" in \eqref{compar-f}  become ``$=$". Following the procedures in (i) and the uniqueness of the solution of BSDE, for all $n\in\dbN^*$, we get
 $$W\big(s,X^{t,x;\bar{\mathbbm{u}}}(s)\big) = {}^n Y^{t,x;\bar{\mathbbm{u}}}(s),\q s\in[t,T],\q \dbP\mbox{-a.s.}$$
Similarly to (i), 
 letting $n\to\i$  and $s=t$, we have
  	$W\big(t,x\big) = Y^{t,x;\bar{\mathbbm{u}}}(t) $.

Finally, combined with \eqref{Step1},  for any $(t,x)\in[0,T]\times\dbR^n$, we get
 $$W(t,x) =J\big(t,x;\bar{\mathbbm{u}}(\cd,X^{t,x;\bar{\mathbbm{u}}}(\cd)) \big)=\essinf_{u(\cd)\in\cU_{t,T}}J\big(t,x;u(\cd)\big)=V(t,x).$$
That is,  the classical solution $W(\cd,\cd)$ of HJB equation  \eqref{HJB} is indeed the value function $V(\cd,\cd)$ of Problem (RC)$_{t,x}$,
and
$\bar{\mathbbm{u}}(\cd,X^{t,x;\bar{\mathbbm{u}}}(\cd))$ is the optimal control of Problem (RC)$_{t,x}$.

 	 \end{proof}

\br\label{Re-H3} \sl
As we see in the proof of Theorem \ref{SVT-class},
the existence and uniqueness of the solutions of  the state equation \eqref{SDE-u} and reflected BSDE \eqref{RBSDE-u} under $\bar{\mathbbm{u}}(\cd,\cd)$  is necessary, so that the additional condition  $\bar{\mathbbm{u}} (\cd,\cd)\in\cL$ is necessary.
\er

%
%
%
%
%
%
%
%

\section{Stochastic Verification Theorem: Viscosity  Solutions}\label{Sec_SVP-C}

\par  In this section, we study the stochastic verification theorem of Problem (RC) within the framework of viscosity solutions.
As we know, there is no enough smoothness for the viscosity solutions so that we can not compute  their derivatives  directly  like the proof of Theorem \ref{SVT-class}. It is necessary to   introduce some new tools in this framework. The first are the  notions of second-order parabolic superdifferentials and subdifferentials(referring to \cite{YZ}).

%
\bde\sl  Let $(t,x)\in [0,T]\times \dbR^n$ and $w\in C([0,T]\times\dbR^n)$, the {second-order parabolic superdifferential} of $w$ at  $(t, x)$ is defined as
\bel{super}\ba{ll}
\ns\ds D^{1,2,+}_{t,x}w(t,x):=  \Big\{(q,p,P)\in \dbR\times \dbR^n\times \dbS^n\Big|\limsup\limits_{s\to t, y\to x}\frac 1{|s- t|+|y- x|^2}\big[w(s,y)-w(t,x)\\
\ns\ds\hskip 6.8cm -q(s-t)-\lan p,y-x\ran-\frac 12(y-x)^\top P(y-x) \big]\les 0\Big\},
\ea\ee
and the {second-order parabolic subdifferential} of $w$ at $(t,x)$ is defined as
\bel{sub}\ba{ll}
\ns\ds D^{1,2,-}_{t,x}w(t,x):=  \Big\{(q,p,P)\in \dbR\times \dbR^n\times \dbS^n\Big|\liminf\limits_{s\to t, y\to x}\frac 1{|s-t|+|y-x|^2}\big[w(s,y)-w(t,x)\\
\ns\ds\hskip6.46cm -q(s-t)-\lan p,y-x\ran-\frac 12(y-x)^\top P(y-x) \big]\ges 0\Big\}.
\ea\ee
\ede
The  second-order right parabolic superdifferential $D^{1,2,+}_{t+,x}w(t,x)$  and subdifferential $D^{1,2,-}_{t+,x}w(t,x)$  can also be defined by modifying $s\to t$ to $s\to t^+$ in \eqref{super} and \eqref{sub}, respectively.
\bl\label{L11}\sl Let  $w\in C([0,T]\times \mathbb{R}^n)$ and $(t_0,x_0)\in [0,T)\times \dbR^n$ be given. Then,
%
\no (i). $(q,p,P)\in D_{t+,x}^{1,2,+}w(t_0,x_0)$ if and only if there exists a function $\f\in  C^{1,2}([0,T]\times \mathbb{R}^n)$ such that, for any $ (t,x)\in [t_0,T]\times \dbR^n$, $(t,x)\neq (t_0,x_0)$,
$\f(t,x)>w(t,x),$  and $$\big(\f(t_0,x_0),\f_t(t_0,x_0),\f_x(t_0,x_0),\f_{xx}(t_0,x_0)\big)=\big(w(t_0,x_0),q,p,P\big).$$
\noindent {\rm(ii).} $(q,p,P)\in D_{t+,x}^{1,2,-}w(t_0,x_0)$ if and only if there exists a function $\f\in  C^{1,2}([0,T]\times \mathbb{R}^n)$ such that, for any $ (t,x)\in [t_0,T]\times \dbR^n$, $(t,x)\neq (t_0,x_0)$,
$\f(t,x)<w(t,x),$   and $$\big(\f(t_0,x_0),\f_t(t_0,x_0),\f_x(t_0,x_0),\f_{xx}(t_0,x_0)\big)=\big(w(t_0,x_0), q,p,P\big).$$
Moreover, if for some $k\ges 1$, $(t,x)\in [0,T]\times \dbR^n$,
 \bel{poly}|w(t,x)|\les C(1+|x|^k),\ee
then we can choose $\f$ such that $\f$, $\f_t$, $\f_x$, $\f_{xx}$  also satisfy \eqref{poly} with different constants $C$.

\el

The details of the above result  can be found in \cite{YZ, ZYL, GSZ1}.
It  will  provide us the smooth test functions to replace the viscosity solutions to compute the derivatives in the  proof of the verification theorem.

\ms

The following two results are borrowed from  \cite{YZ, GSZ1, CL-arxiv}.

\bl\label{lemma1} \sl Suppose {\bf(H1)}, let $(t,x)\in [0,T]\times \mathbb{R}^n$ be fixed and $ u (\cd) \in \cU_{t,T},$ $X^{t,x;u}(\cd)$  be the corresponding state process of \eqref{SDE}. By defining the following processes
$$
 \psi_1(r):=  b(r,X^{t,x;u}(r) ,u(r)  ),\q  \psi_2(r):=  \sigma\sigma^\top(r,X^{t,x;u}(r) ,u(r)  ),\q r\in[t,T],
 $$
we get  \be\lim\limits_{h\to0^+} \frac 1h \int_s^{s+h}\dbE|\psi_i(r)-\psi_i(s) |dr =0,\ \mbox{a.e.}, \ s\in[t,T],\ i=1,2.\ee

\el

\bl\label{le22}\sl
Let $g\in C([0,T])$ and extend $g$ to $(-\i,+\i)$ by setting $ g(t)=\left\{\ba{ll}g(0),\ t<0\\
g(t),\   t\in[0,T].\\
g(T),\ t>T\ea\right.$
  Suppose that for all $\d\in(0,T),$  there is a function  $\rho(\cd)\in L^1(0,T-\d;\mathbb{R})$ and some $h_0>0$, such that
\bel{}\frac{g(t+h)-g(t)}{h}\les \rho(t),\ \mbox{a.e.}, \ t\in [0,T-\d),\ h\les h_0,\ee
then,
$$g(\b)-g(\a)\les \int_\a^\b \limsup\limits_{h\to 0^+}\frac{g(t+h)-g(t)}{h}dt,\qq 0\les \a <\b\les T-\d. $$


\el
%
%
%

\subsection{The Main Theorem}

%
%
%
%
%
%
%
%
%
%
%
%

Before presenting the main result, we introduce the following two conditions.

\ss

\noindent {\bf (D1).} For all $x\in\dbR^n$ and $\d\in(0,T)$, for any $t,t'\in [0,T-\d]$, $ |\phi(t,x)-\phi(t',x)|\les C_{1,\d}(1+|x|)|t-t'| ,$ where $C_{1,\d}>0$ is a constant depending on $\d$;

\noindent {\bf (D2).} For all  $\d\in(0,T)$,  $\phi(t,\cd)$ is $C_{2,\d}$-semiconcave, uniformly in   $t\in [0,T-\d]$, i.e., there exists some constant  $C_{2,\d}>0$, such that $\phi(t,\cd)-C_{2,\d}|\cd|^2$ is concave on $\dbR^n$.

\ms

The following is the stochastic verification theorem within the framework of viscosity solutions.

\bt\label{SVT-VS}\sl
Let $W(\cd,\cd)\in C_p([0,T]\times\dbR^n)$ be the viscosity solution of   the HJB equation   with obstacle \eqref{HJB} and satisfy  the conditions  {\bf (D1)} and  {\bf (D2)}. For  $(t,x)\in [0,T]\times \dbR^n$, let $(\bar u(\cd),X^{t,x;\bar {u}}(\cd))$ be the admissible pair and  $(Y^{t,x; \bar{u}}(\cd),Z^{t,x; \bar{u}}(\cd),K^{t,x; \bar{u}}(\cd))$ solve reflected BSDE \eqref{RBSDE}  under the control process $\bar u(\cd)\in\cU_{t,T}$. Assume that  there exists  a triple of
$
\big(\bar q,\bar p, \bar P\big)\in L^2_\dbF(t,T;\dbR)\times L^2_\dbF(t,T;\dbR^n)\times L^2_\dbF(t,T;\dbS^n)
$
such that
$$\left\{\ba{ll}
\ns\ds\!\!\! {\rm(i).}\   \big(\bar q(s),\bar p(s) , \bar P(s) \big)\in D^{1,2,+}_{t+,x}W\big(s,X^{t,x; \bar{u}}(s) \big) , \ \mbox{a.e.}  \ s\in [t,T],\ \dbP\mbox{-a.s.};\\
\ns\ds\!\!\! {\rm(ii).}\     \bar p(s) .\si\big(s,X^{t,x; \bar{u}}(s) ,\bar{u}(s)\big)=Z^{t,x; \bar{u} }(s) ,\ \mbox{a.e.} \ s\in [t,T],\  \dbP\mbox{-a.s.};\\
\ns\ds\!\!\! {\rm(iii).} \  \dbE\int_t^T \[\bar q(s) + \dbH \big( s,X^{t,x; \bar{u} }(s),Y^{t,x; \bar{u}  }(s),\bar p(s) , \bar P(s)   , \bar{u}(s)  \big)\]ds \les0  .
\ea\right.$$
Then, $\bar u(\cd)$ is the optimal control of Problem (RC)$_{t,x}$.

\et

\begin{proof}
Firstly, from the uniqueness of the viscosity solution of  \eqref{HJB} (referring to  Lemma \ref{Le-vis}), we know, for any $(t,x)\in[0,T]\times\dbR^n$ and $u(\cd)\in\cU_{t,T}$
\bel{ee1}
W(t,x)=V(t,x)\les J\big(t,x;u(\cd)\big).
\ee

If we  fix some point $t_0\in [t,T]$ such that (i) holds true, and
 \bel{equ-lmn}\ba{ll}
 \ns\ds \lim\limits_{h\to 0^+} \frac 1h \int_{t_0}^{t_0+h} \dbE\big|b\big(r,X^{t,x; \bar{u}}(r) ,\bar u(r) \big)-b\big(t_0,X^{t,x; \bar{u} } (t_0),\bar u(t_0)\big)\big|dr=0,  \\
  \ns\ds \lim\limits_{h\to 0^+} \frac 1h \int_{t_0}^{t_0+h} \dbE\big|\si\si^\top\big(r,X^{t,x;\bar {u} }(r) ,\bar u(r)  \big)-\si\si^\top\big(t_0,X^{t,x; \bar {u}  } (t_0),\bar u(t_0)\big)\big|dr=0,
 \ea\ee
then, from condition (i) and Lemma \ref{lemma1}, we know the set of such points $t_0$ is of full measure in $[t,T]$.

Given $\cF_{t_0}^t:= \si\{B(r):t\les r\les t_0 \}$ augmented by all the $\dbP$-null sets in $\cF$, we fix $\o_0\in \O$ such that the regular conditional probability $\dbP(\cd\mid\cF_{t_0}^t)(\o_0)$  is well defined.
In this new probability space $\big(\O,\cF,\dbP(\cd\mid\cF_{t_0}^t)(\o_0)\big)$, the random variables $X^{t,x; \bar u }(t_0)$, $\bar q(t_0)$, $\bar p(t_0)$, $\bar P(t_0)$ are almost surely deterministic constants and equal to  $X^{t,x; \bar u} (t_0,\o_0)$, $\bar q(t_0,\o_0)$, $\bar p(t_0,\o_0)$, $\bar P(t_0,\o_0)$, respectively. Note that in this probability space the Brownian motion  $B$ is still a standard Brownian motion, although now $B(t_0)=B(t_0,\o_0)$ almost surely. Now the space is equipped with a new filtration $\{\cF_s^{t_0}\}_{t_0\les s\les T}$ and the control process $\bar {u}(\cd)$ is adapted to the new filtration.
 For $\o_0$, the process $X^{t,x; \bar u }(\cd) $ is a solution of \eqref{SDE} on $[t_0,T]$  in $\big(\O,\cF,\dbP(\cd\mid\cF^{t}_{t_0})(\o_0)\big)$ with the  initial condition $X^{t,x;  \bar u }(t_0)=X^{t,x; \bar u }(t_0,\o_0)$.

From (i) and   Lemma \ref{L11}-(i), we know there exists a function $\bar\f(\cd,\cd) \in  C^{1,2}([0,T]\times \mathbb{R}^n)$ such that, $W-\bar\f$ attains a strict maximum over $[t_0,T]\times\dbR^n$ at $\big(t_0,X^{t,x;\bar u }(t_0)\big)$, and
\bel{equ999}\ba{ll}
 \ns\ds \(\bar\f\big(t_0,X^{t,x;\bar u }(t_0) \big),\bar\f_t\big(t_0,X^{t,x;\bar u }(t_0)\big),\bar\f_x\big(t_0,X^{t,x;\bar u }(t_0)\big),\bar\f_{xx}\big(t_0,X^{t,x;\bar u }(t_0)\big)\)\\
 \ns\ds =\(W\big(t_0,X^{t,x;\bar {u} }(t_0,\o_0)\big),\bar q(t_0,\o_0), \bar p(t_0,\o_0),\bar P(t_0,\o_0)\).\ea\ee
The linear growth of $W(\cd,\cd)$ in Lemma \ref{Le-con} implies us
$\bar\f,$ $\bar\f_t,$ $ \bar\f_x,$ $\bar\f_{xx}$ are also linear growth in $x$, i.e.,
\bel{phi-lg}
|\bar\f(t,x)|+|\bar\f_t(t,x)|+|\bar\f_x(t,x)|+|\bar\f_{xx}(t,x)|\les C(1+|x|),\q (t,x)\in[0,T]\times\dbR^n.
\ee
 Note that, on the space $\big(\O,\cF,\dbP(\cd\mid\cF_{t_0}^t)(\o_0)\big)$, $\bar\f$ is the deterministic function when $(t_0,\o_0)$ is fixed.

 For any $h>0$, applying It\^o's formula to $\bar\f\big(\cd,X^{t,x; \bar u }(\cd)\big)$ on $[t_0,t_0+h]$, we  have
 $$
 \ba{ll}
 \ns\ds \bar \f\big(t_0+h,X^{t,x; \bar u }(t_0+h)\big)-\bar\f\big(t_0,X^{t,x;\bar u }(t_0)\big)\\
 \ns\ds=\int_{t_0}^{t_0+h}\[\bar\f_t\big(r,X^{t,x;\bar u }(r) \big)+\big\langle \bar\f_x\big(r,X^{t,x;\bar u }(r) \big), b\big(r,X^{t,x;\bar u }(r) , \bar{u}(r)\big)\big\rangle \\
 \ns\ds\qq\qq\q  + \tr\big(a  (r,X^{t,x;\bar u }(r) , \bar{u}(r) ) \bar\f_{xx} (r,X^{t,x;\bar u }(r)  )\big) \]dr\\
 \ns\ds\q +\int_{t_0}^{t_0+h}\big\langle  \bar\f_x\big(r,X^{t,x; \bar u }(r) \big), \si\big(r,X^{t,x;\bar u }(r) , \bar{u}(r)\big)\big\rangle dB(r).
 \ea
 $$
 %
 Note that the condition {\bf (H1)}, the regularity properties \eqref{est888}  of $X^{t,x; \bar u }(\cdot)$, as well as the choice  of $\bar\f$ satisfying \eqref{phi-lg}, imply us that all the   integrals in the above equality make sense.

 Taking the conditional expectation $\dbE_{t,t_0}[\cd]:=\dbE\big[\cd\mid\cF_{t_0}^t (\o_0) \big](\o_0) $, we get
 $$
 \ba{ll}
  \ns\ds \frac 1h\dbE_{t,t_0}\Big[W\big(t_0+h,X^{t,x;\bar u }(t_0+h)\big)-W\big(t_0,X^{t,x;\bar u }(t_0)\big) \Big]\\
 \ns\ds\les \frac 1h\dbE_{t,t_0}\Big[\bar\f\big(t_0+h,X^{t,x;\bar u }(t_0+h)\big)-\bar\f\big(t_0,X^{t,x;\bar u }(t_0)\big) \Big]\\
 \ns\ds= \frac 1h\dbE_{t,t_0}\Big[\int_{t_0}^{t_0+h}\Big(\bar\f_t\big(r,X^{t,x;\bar u }(r) \big)+\big\langle \bar\f_x\big(r,X^{t,x;\bar u }(r) \big), b\big(r,X^{t,x;\bar u }(r) , \bar{u}(r)\big)\big\rangle \\
 \ns\ds\qq\qq\qq\q\q\ + \tr\big(a (r,X^{t,x;\bar u }(r) ,\bar{u}(r) ) \bar\f_{xx} (r,X^{t,x;\bar u }(r) )\big) \Big)dr  \Big].
 \ea
 $$
By letting $h\to 0^+$, and applying  \eqref{equ-lmn}, \eqref{equ999},  we obtain
 \bel{ineq-111}
 \ba{ll}
  \ns\ds \frac 1h\limsup\limits_{h\to 0^+}\dbE_{t,t_0}\Big[W\big(t_0+h,X^{t,x; \bar u }(t_0+h)\big)-W\big(t_0,X^{t,x;\bar u }(t_0)\big) \Big]\\
 \ns\ds\les\lim\limits_{h\to0^+}\frac 1h\dbE_{t,t_0}\Big[\int_{t_0}^{t_0+h}\Big(\bar \f_t\big(r,X^{t,x;\bar u }(r) \big)+\big\langle \bar\f_x\big(r,X^{t,x;\bar u }(r) \big), b\big(r,X^{t,x;\bar u }(r) , \bar{u}(r)\big)\big\rangle \\
 \ns\ds\hskip4cm + \tr\big(a (r,X^{t,x;\bar u }(r) ,\bar{u}(r) ) \bar\f_{xx} (r,X^{t,x;\bar u }(r) )\big) \Big)dr  \Big]\\
 \ns\ds= \bar \f_t\big(t_0,X^{t,x;\bar u }(t_0)\big)+\big\langle \bar\f_x\big(t_0,X^{t,x;\bar u }(t_0)\big), b\big(t_0,X^{t,x;\bar u }(t_0), \bar u(t_0) \big)\big\rangle \\
 \ns\ds\q    + \tr\big(a (t_0,X^{t,x;\bar u }(t_0),\bar{u}(t_0) )\bar\f_{xx} (t_0,X^{t,x;\bar u }(t_0) )\big)   \\
 \ns\ds=\bar q(t_0)+\big\langle \bar p(t_0), b\big(t_0,X^{t,x;\bar u }(t_0),\bar{u}(t_0)\big)\big\rangle    + \tr\big(a  \big(t_0,X^{t,x;\bar u }(t_0),\bar{u}(t_0)\big)\bar P(t_0)\big).
 \ea
\ee
The above limit process (the first ``$=$") is similar to the one in \cite{GSZ1}, so we omit it to avoid repetition. 

Next, we claim that, for all $ \d\in(0,T)$ and the previous   $t_0$ lying in $[t,T-\d)$,
 for any  $h>0 $ with  $t_0+h\les T-\d$,
\bel{ineq-000}\ba{ll}
\ns\ds {\rm(a).}\ \   \frac 1h\dbE_{t,t_0}\[  W\big(t_0+h,X^{t,x; \bar u }(t_0+h)\big)-W\big(t_0,X^{t,x; \bar u }(t_0) \big)   \]\les   C(1 +|x|^2),\\
 \ns\ds {\rm(b).}\ \  \frac 1h \dbE\[   W\big(t_0+h,X^{t,x; \bar u }(t_0+h)\big)-W\big(t_0,X^{t,x; \bar u }(t_0) \big)  \]\les C(1 +|x|^2) .
 \ea\ee
In fact, from {\bf(D1)}, {\bf(D2)} and $\big(\bar q(t_0) ,\bar p(t_0) , \bar P(t_0) \big)\in D^{1,2,+}_{t+,x}W(t_0,X^{t,x; \bar u }(t_0))$, we know, for all $\d\in(0,T)$,  for any $h\in (0,T-t_0-\d]$,
\bel{} \ds W\big(t_0+h,X^{t,x; \bar u }(t_0+h)\big)-W\big(t_0,X^{t,x; \bar u }(t_0) \big)\les {\rm  I }+\rm{II}+\rm{ III},\ee
with
$$\left\{\ba{ll}
\ns\ds\!\!\! {\rm  I }:= C_{1,\d}\big(1+|X^{t,x; \bar u }(t_0+h)| \big)h,\qq C_{1,\d}  \mbox { is the one in {\bf(D1)} };\\
\ns\ds \!\!\! {\rm II}:=\big\langle \bar p(t_0) , X^{t,x; \bar u }(t_0+h)-X^{t,x; \bar u }(t_0) \big\rangle ,\\
\ns\ds\!\!\!  {\rm  III} :=C_{2,\d}|X^{t,x; \bar u }(t_0+h)-X^{t,x; \bar u }(t_0) |^2,\qq   C_{2,\d}    \mbox { comes from } {\bf(D2)}.
\ea\right.$$

Using  \eqref{phi-lg}, we have   $|\bar q(t_0) |+|\bar p(t_0) |+|\bar P(t_0) |\les C(1+|X^{t,x; \bar u }(t_0) | ).$
Further, combined with the estimate \eqref{est888}, we get
$$
 \ba{ll}
  \ns\ds \dbE _{t,t_0}\big[{\rm I }  \big]\les C_{1,\d} h+ C_{1,\d} h\Big(\dbE _{t,t_0}  \Big[\sup\limits_{ r\in [t_0,t_0+h]}|X^{t,x; \bar u }(r)|^2 \Big]\Big)^{\frac{1}{2}}\les  C h(1+ |x|^2 )^\frac12, \\
  %
  \ns\ds  \dbE _{t,t_0}\big[{\rm II} \big]
  %
  \les  \dbE_{t,t_0} \[ \big\langle \bar p(t_0),\int_{t_0}^{t_0+h}b\big(r,X^{t,x; \bar u }(r) ,\bar{u}(r)\big)dr\big\rangle  \]\\
\ns\ds\hskip1.3cm\les  \(\dbE_{t,t_0}\big[|\bar p(t_0)|^2  \big]\)^\frac12\(\dbE_{t,t_0}\[\big(\int_{t_0}^{t_0+h}b\big(r,X^{t,x; \bar u }(r) ,\bar{u}(r)\big)dr\big)^2   \] \)^\frac12\\
\ns\ds\hskip1.3cm\les Ch\(1+\dbE_{t,t_0}\[\sup\limits_{ r\in [t_0,t_0+h]}|X^{t,x; \bar u }(r)|^2  \]\) \les Ch(1+ |x|^2) ,
\ea$$
and
$$
 \ba{ll}
  \ns\ds  \dbE _{t,t_0}\big[{\rm III} \big]= C_{2,\d}\dbE _{t,t_0}\big[|X^{t,x; \bar u }(t_0+h)-X^{t,x; \bar u }(t_0) |^2  \big]  \\
\ns\ds\les 2C_{2,\d}\dbE _{t,t_0}\[ \(\int_{t_0}^{t_0+h}b\big(r,X^{t,x; \bar u }(r) ,\bar{u}(r)\big)dr  \)^2\]+ 2C_{2,\d}\dbE _{t,t_0} \[ \(\int_{t_0}^{t_0+h}\si\big(r,X^{t,x; \bar u }(r) ,\bar{u}(r)\big)dB(r)   \)^2\]\\
\ns\ds\les Ch \(1+\dbE _{t,t_0}\[\sup\limits_{ r\in [t_0,t_0+h]}|X^{t,x; \bar u }(r)|^2  \]\) \les Ch(1+ |x|^2) .
\ea$$
Therefore,
$$
 \ba{ll}
 \ns\ds \frac 1h\dbE_{t,t_0}\[ W\big(t_0+h,X^{t,x; \bar u }(t_0+h)\big)-W\big(t_0,X^{t,x; \bar u }(t_0) \big) \]  \les \frac 1h\dbE_{t,t_0} \[ {{\rm  I }+\rm{II}+\rm{ III}}\] \les C(1+ |x|^2) .
\ea$$
All the above constants $C$ can be different and do not depend on $t_0$.
Further, by taking the expectation on the both sides
of the above inequality, we get \eqref{ineq-000}-(b).

Taking expectation on the both sides of \eqref{ineq-111}, and applying Fatou's Lemma (needing \eqref{ineq-000}-(a)), we have
 $$
 \ba{ll}
  \ns\ds \limsup\limits_{h\to 0^+}\frac 1h\dbE\Big[W\big(t_0+h,X^{t,x;\bar u }(t_0+h)\big)-W\big(t_0,X^{t,x;\bar u }(t_0)\big) \Big]\\
  \ns\ds= \limsup\limits_{h\to 0^+}\frac 1h\dbE\[\dbE_{t,t_0}\big[W\big(t_0+h,X^{t,x;\bar u }(t_0+h)\big)-W\big(t_0,X^{t,x;\bar u }(t_0)\big) \big]\]\\
 \ns\ds\les \dbE\[\limsup\limits_{h\to 0^+}\frac 1h\dbE_{t,t_0}\big[W\big(t_0+h,X^{t,x;\bar u }(t_0+h)\big)-W\big(t_0,X^{t,x;\bar u }(t_0)\big) \big]\]\\
 \ns\ds\les  \bar q(t_0) +\big\langle \bar p(t_0), b\big(t_0,X^{t,x;\bar u }(t_0),\bar{u}(t_0)\big)\big\rangle  + \tr\big(a (t_0,X^{t,x;\bar u }(t_0),\bar{u}(t_0))\bar P(t_0)\big).
 %
 \ea
 $$
 %
 %
Due to the set of such points $t_0$ being of full measure in $[t,T-\d]$, by  applying Lemma \ref{le22} (needing \eqref{ineq-000}-(b)), for any  $u(\cd) \in \cU_{t,T},$ we have
  $$
 \ba{ll}
  \ns\ds  \dbE\big[W\big(T-\d,X^{t,x;\bar u }(T-\d) \big)-W(t,x) \big] \\
 \ns\ds\les \int_t^{T-\d} \dbE\[\bar q(s) +\big\langle \bar p(s) , b\big(s,X^{t,x;\bar u }(s) ,\bar{u}(s)\big)\big\rangle +\tr\big(a (s,X^{t,x;\bar u }(s) ,\bar{u}(s))\bar P(s)\big)  \]ds .\\
 %
 \ea$$
According to Lebesgue  dominated convergence theorem apnd  the continuity properties of $W(\cd,\cd)$ and $X^{t,x;\bar u }(\cd)$, letting $\d\to0$ in the above, we get
  $$
 \ba{ll}
  \ns\ds  \dbE\big[W\big(T,X^{t,x;\bar u }(T) \big)-W(t,x) \big]= \dbE\big[\Phi\big(X^{t,x;\bar u }(T) \big)-W(t,x) \big]\\
 \ns\ds\les \int_t^T \dbE\[\bar q(s) +\big\langle \bar p(s) , b\big(s,X^{t,x;\bar u }(s) ,\bar{u}(s)\big)\big\rangle +\tr\big(a (s,X^{t,x;\bar u }(s) ,\bar{u}(s))\bar P(s)\big)  \]ds \\
 \ns\ds\les - \dbE \[\int_t^T f\big(s,X^{t,x;\bar u }(s) ,Y^{t,x;\bar u }(s)  ,\bar p(s) .\sigma(s, X^{t,x;\bar u }(s) ,\bar{u}(s)),
 \bar{u}(s)\big) ds\]\\
  \ns\ds\les - \dbE\[\int_t^T f\big(s,X^{t,x;\bar u }(s) ,Y^{t,x;\bar u }(s)  ,Z^{t,x;\bar u }(s) ,
 \bar{u}(s)\big) ds -K^{t,x;\bar u}(T)\],
 \ea$$
  where we have used the conditions (ii), (iii) and $K^{t,x; \bar{u}}(\cd)\in \cA^2_c(t,T;\dbR^n)$.
That is, for any $(t,x)\in[0,T]\times\dbR^n$,
 $$
 \ba{ll}
  \ns\ds W(t,x)\ges  \dbE\Big[\Phi\big(X^{t,x;\bar u }(T) \big)   + \int_t^T  f\big(s,X^{t,x;\bar u }(s) ,Y^{t,x;\bar u }(s)  ,Z^{t,x;\bar u }(s) , \bar{u}(s)\big) ds-K^{t,x;\bar u}(T)\Big]=J\big(t,x; \bar u(\cd)\big). \\
 \ea$$
 Combined with \eqref{ee1}, we get
 $
 W(t,x)=  V(t,x)=J\big(t,x; \bar u (\cd)\big),
$
which means $\bar u(\cd)$ is the optimal control of Problem (RC)$_{t,x}$.

\end{proof}

Note that, in order to obtain the stochastic verification theorem of Problem (RC), the viscosity solution of \eqref{HJB} needs to satisfy the additional conditions {\bf(D1)} and {\bf(D2)}.
We want to say it is possible, though   a bit  restrictive. Now, we present the   conditions ensuring  the viscosity solution of \eqref{HJB} to satisfy {\bf(D1)} and {\bf(D2)} as follows.

\ss

\no {\bf (A1).} (i) The functions $b(\cd,x,u)$, $\si(\cd,x,u)$, $f(\cd,x,y,z,u)$, $h(\cd,x)$   are Lipschitz in $t\in[0,T]$, uniformly with respect to $(x,u)\in\dbR^n\times U$;

\hskip0.6cm (ii) The functions $b,$ $\si$, $f$, $\F$, $h$ are bounded.

\no {\bf (A2).} (i) $f(t,x,y,z,u)$ is semiconcave in $(x,y,z)\in\dbR^n\times \dbR\times\dbR^d,$ uniformly with respect to $(t,u)\in [0,T]\times U$; $\F(x)$ is semiconcave in $x\in\dbR^n$;

\hskip0.6cm (ii) $b(t,x,u)$ and $\si(t,x,u)$ are differentiable in $x\in\dbR^n$, and the corresponding  first order partial derivatives are continuous in $(t,x,u)$, Lipschitz continuous in $x$, uniformly with respect to $(t,u)\in[0,T]\times  U$.

\no {\bf (A3).}   $f(t,x,y,z,u)=f(t,x,y,u)$ is independent of $z$, and  $h$ is semiconcave in $x\in \dbR^n$.

\no {\bf (A4).} $h(t,x)=h\in \dbR$, $(t,x)\in[0,T]\times \dbR^n$.

\ms

From Theorem 1.1, Remark 1.3 and Theorem 3.1 in \cite{BHL2012}, we get the following two results.

\bl\sl
Under  {\bf (H1)}, {\bf (H2)} and {\bf (A1)}, the value function $V(\cd,\cd)$ defined  by \eqref{OPC} is joint Lipschitz continuous in $(t,x)\in [0,T-\d]\times \dbR^n$ for all $\d\in(0,T)$, i.e., there exists $C_{1,\d}>0$ such that
for any $(t,x)$, $(t',x')\in [0,T-\d]\times \dbR^n $,
  $$|V(t,x)-V(t',x')|\les C_{1,\d}(|t-t'|+|x-x'|).$$

\el

\bl\sl Suppose that  {\bf (H1)}, {\bf (H2)} and {\bf (A1)}, {\bf (A2)} hold, as well as {\bf (A3)} or  {\bf (A4)}. Then, for all $\d\in(0,T)$, there exists some $C_{2,\d}>0$ such that the value function $V(\cd,\cd)$ defined  by \eqref{OPC} is $C_{2,\d}$-semiconcave, i.e.,
$V(t,\cd)-C_{2,\d}|\cd|^2$ is concave, uniformly in $t\in[0,T]$.

\el

Combining Lemma \ref{Le-vis}  with the above two lemmas, we know  {\bf(D1)}, {\bf(D2)} can be satisfied when $W(\cd,\cd)$ is taken as the value function $V(\cd,\cd)$ of Problem (RC)$_{t,x}$  under the conditions {\bf (H1)}, {\bf (H2)}, {\bf (A1)}, {\bf (A2)},  as well as {\bf (A3)} or  {\bf (A4)}.  Therefore, the assumptions  {\bf(D1)} and {\bf(D2)} in Theorem \ref{SVT-VS} are acceptable.

\subsection{Feedback Optimal Control }
\par  In this subsection, we shall construct the feedback optimal control of Problem (RC) from the viscosity solution of HJB equation with obstacle \eqref{HJB}.

\bl\label{th4.10}\sl Assume {\bf (H1)} and {\bf (H2)}. Then the value function $V(\cd,\cd)$ in \eqref{OPC} is the only function satisfying Lemma \ref{Le-con}  and   the following: for any $(t,x)\in [0,T]\times \dbR^n$,
\bel{VIS-inequ}
\left\{\ba{ll}
\ns\ds\!\!\!  \max\big\{V(t,x)- h(t,x),-q-\inf_{u\in U}\dbH(t,x, V(t,x),p,P,u) \big\}\les 0,\q\forall (q,p,P)\in D^{1,2,+}_{t+,x}V(t,x);\\
\ns\ds\!\!\!  \max\big\{V(t,x)- h(t,x),-q-\inf_{u\in U}\dbH(t,x,V(t,x),p,P,u)\big\}\ges 0,\q \forall (q,p,P)\in D^{1,2,-}_{t+,x}V(t,x);\\
\ns\ds\!\!\! V(T,x)=\Phi(x).
\ea\right.
\ee

\el

\begin{proof}
	From Lemma \ref{Le-vis}, we know the value function $V(\cd,\cd)\in C_p([0,T]\times\dbR^n)$ is the  unique viscosity solution of \eqref{HJB}.  By Lemma \ref{L11}-(i), for any $(q,p,P)\in D^{1,2,+}_{t+,x}V(t,x)$,  we can find a  function $\f\in  C^{1,2}([0,T]\times \mathbb{R}^n)$ such that, for any $ (s,y)\in [t,T]\times \dbR^n$, $(s,y)\neq (t,x)$,
	$\f(s,y)>V\big(s,y),$  and $$\big(\f(t,x),\f_t(t,x),\f_x(t,x),\f_{xx}(t,x)\big)=\big(V(t,x),q,p,P\big).$$	
	Then, by following the procedures   in \cite{WY-2008, BL-2011} (only the right limit in time will be used there),  for the above test function  $\f$, we have
	$$\max\big\{V(t,x)- h(t,x),-\f_t(t,x)-\inf_{u\in U}\dbH(t,x, V(t,x),\f_x(t,x),\f_{xx}(t,x),u) \big\}\les 0,$$
	which results in the first inequality in \eqref{VIS-inequ}.
	The details of the second one in \eqref{VIS-inequ} is similar.

	Further, the uniqueness comes from the uniqueness of the viscosity solution of  \eqref{HJB} and $D^{1,2,+}_{t,x}V(t,x)\subseteq D^{1,2,+}_{t+,x}V(t,x)$, $D^{1,2,-}_{t,x}V(t,x)\subseteq D^{1,2,-}_{t+,x}V(t,x)$.

\end{proof}

\bt\label{th4.11} \sl Assume {\bf (H1)}-{\bf (H3)}. Suppose $W\in   C_p([0,T]\times\dbR^n)$ satisfying {\bf(D1), \bf(D2)} is the viscosity solution of
\eqref{HJB}. Then, for each $(t,x)\in [0,T]\times\dbR^n$,  
\bel{equ012}
\ds\inf_{(q,p,P,u)\in D^{1,2,+}_{t+,x}W(t,x)\times U}\[q+\dbH(t,x, W(t,x),p,P,u)\]\ges W(t,x)-h(t,x).
\ee
Further,  if $\mathbbm{u}(\cdot,\cdot)\in\mathcal{L}$ and for all $(t,x)\in[0,T]\times \dbR^n$, $ \mathbbm{q},$ $\mathbbm{p},$ $\mathbbm{P}$ are measurable functions
satisfying $(\mathbbm{q},\mathbbm{p},\mathbbm{P})\in D^{1,2,+}_{t+,x}W(t,x)$, and
\bel{equ015}\left\{\ba{ll}
\ns\ds\!\!\! {\rm (i).} \ \dbE\int_t^T\[\mathbbm{q}(s,X^{t,x;\mathbbm{u}}(s)) +\dbH\big(s,X^{t,x;\mathbbm{u}}(s) ,Y^{t,x;\mathbbm{u} }(s),  \Theta(s,X^{t,x;\mathbbm{u}}(s)),\mathbbm{u}(s,X^{t,x;\mathbbm{u}}(s)) \big)\]ds \les 0,\\
\ns\ds\!\!\! {\rm (ii).} \     \mathbbm{p}(s,X^{t,x;\mathbbm{u}}(s)).\si\big(s,X^{t,x;\mathbbm{u} }(s) ,\mathbbm{u}(s,X^{t,x;\mathbbm{u}}(s))  \big)=Z^{t,x;\mathbbm{u}  }(s), \mbox{ a.e.}.\ s\in[t,T], \mbox{ P-a.s.},
\ea\right.\ee
where $ X^{t,x;\mathbbm{u} }(\cd)$, $(Y^{t,x;\mathbbm{u} }(\cd),Z^{t,x;\mathbbm{u} }(\cd),K^{t,x;\mathbbm{u} }(\cd))$ satisfy   \eqref{SDE-u} and  \eqref{RBSDE-u} with $ \mathbbm{u} (\cd,X^{t,x;\mathbbm{u} }(\cd)) \in \cU_{t,T} $, respectively, and $\Th (\cd,X^{t,x;\mathbbm{u}}(\cd))=\big (\mathbbm{p}(\cd,X^{t,x;\mathbbm{u}}(\cd)),\mathbbm{P}(\cd,X^{t,x;\mathbbm{u}}(\cd)) \big).$
Then,     $ \mathbbm{u}(\cd,\cd)$ is an optimal feedback  control law   of Problem (RC)$_{t,x}$.

\et

\begin{proof} \emph{Step 1:}
	From the uniqueness of the viscosity solution of HJB equation \eqref{HJB} and   Lemma \ref{th4.10},
	for any $(t,x)\in [0,T]\times \dbR^n$,   $ (q,p,P)\in D^{1,2,+}_{t+,x}W(t,x)$, we have
	$$  W(t,x)\les  h(t,x) \mbox{ and }  -q-\inf_{u\in U}\dbH(t,x, W(t,x),p,P,u)  \les 0,
	$$
	Then, for any  $u\in U  $,
	$$
	q+ \dbH(t,x, W(t,x),p,P,u) \ges  q+\inf_{u\in U}\dbH(t,x, W(t,x),p,P,u)  \ges W(t,x)-h(t,x),
	$$
	i.e. \eqref{equ012} holds true.
	
	\emph{Step 2:} For any $(t,x)\in [0,T]\times \dbR^n$, the admissble feedback control law  $\mathbbm{u}(\cd,\cd)$ and the solution $X^{t,x;\mathbbm{u} }(\cd)$  of \eqref{SDE-u}, we set
	$$\ba{ll}
	\bar u(s):= \mathbbm{u}(s,X^{t,x;\mathbbm{u} }(s) ), \enspace \bar q(s):= \mathbbm{q}(s,X^{t,x;\mathbbm{u} }(s) ),\enspace \bar p(s):= \mathbbm{p}(s,X^{t,x;\mathbbm{u} }(s) ),\enspace  \bar P(s) := \mathbbm{P}(s,X^{t,x;u\mathbbm{u}}_s ),\enspace s\in[t,T].
	\ea$$
	By \eqref{equ015},  $\bar u(\cd)$, $X^{t,x;\mathbbm{u} }(\cd)$,  $(Y^{t,x;\mathbbm{u} }(\cd),Z^{t,x;\mathbbm{u} }(\cd),K^{t,x;\mathbbm{u} }(\cd))$ and  the above $(\bar q,\bar p, \bar P)$ satisfy (i), (ii) and (iii) in Theorem \ref{SVT-VS}, so  $ \bar u(\cd)$ is the optimal control, i.e.,  $\mathbbm{u}(\cd,\cd)$ is the optimal feedback control law.
	
\end{proof}

Finally, we have a look at the procedures of finding the optimal feedback control law.
By Theorem \ref{th4.11}, we  can get the candidate of  optimal feedback control law by minimizing $$q+ \dbH(t,x, W(t,x),p,P,u)$$
over $D^{1,2,+}_{t+,x}W(t,x)\times U$ such that \eqref{equ012} holds true.
Further, to ensure the candidate to be the true optimal feedback control law, there are three  things to do. The first is  to    obtain the measurable selection $\mathbbm{u}(\cd,\cd)$  and
$(\mathbbm{q},\mathbbm{p},\mathbbm{P})$ of $ D^{1,2,+}_{t+,x}W(t,x)$. Secondly,  we need to make sure
the candidate $\mathbbm{u}(\cd,\cd)$ is admissible, that is,  the existence of the solutions of  SDE \eqref{SDE-u} and RBSDE \eqref{RBSDE-u}.
Finally,   \eqref{equ015}  is still waiting for validation.   Especially, \eqref{equ015}-(ii) is necessary for  the control problems involving BSDEs.

In the above three steps, the first  and
the relevant  discussion on  SDE \eqref{SDE-u} in the second  step  have been made in Section 6, Chapter 5 of \cite{YZ}.
The remaining  is still blank and will be a big project. We skip it now and  hope to get some research results    in the future works.

 \section{Example}
\par In this section, we give two specific examples to illustrate that the obtained verification theorems give a way to construct an optimal control or to test whether a given admissible control is optimal. For simplicity, we assume $n=d=1$ in this section.
 The first example  is within the framework of  the classical solution.

\begin{example}\label{CEP}\sl
For any initial pair $(t,x)\in[0,T]\times\dbR$,   consider the following controlled system,
\bel{CEP-SDE}
 \left\{
  \ba{ll}
  \ns\ds\!\!\! dX(s)=(X(s)+u(s))ds+X(s)dB(s),\q s\in [t,T],\\
  \ns\ds\!\!\! X(t)=x.
  \ea
 \right.
\ee
with the control process $u(\cd)$ valued  in $U=[0,1]$. For   $(t,x)\in[0,T]\times\dbR^n$, by selecting $u(\cd)$, we shall minimize the following cost functional
$$J(t,x;u(\cd)):=Y(t),$$
where  $(Y(\cd),Z(\cd),K(\cd))$ is the solution of the following  BSDE with upper obstacle,
\bel{CEP-RBSDE}
 \left\{
  \ba{ll}
  \ns\ds\!\!\! Y(s)=X(T)+\int_s^T( Y(r)+u(r))dr-(K(T)-K(s))-\int_s^T Z(r)dB(r),\q s\in [t,T],\\
  \ns\ds\!\!\! Y(s)\les X(s)e^{2T},\q \ae s\in [t,T],\\
  \ns\ds\!\!\! \int_t^T \big( X(r)e^{2T}-Y(r) \big) dK(r)=0.\\
  \ea
 \right.
\ee
By the previous preliminaries, the above  control problem  which we denote by Problem $(C)_{t,x}$ makes sense obviously.
%
%
%
%

%
%
%
   Now we convert to  the  obstacle problem of HJB equation associated with Problem $(C)_{t,x}$ as follows,
\bel{CEP-HJB}
 \left\{
  \ba{ll}
  \ns\ds\!\!\!  \max\Big\{\! W(t,x)- x e^{2T}, -\frac{\partial}{\partial t}W(t,x)\!-\!\inf_{u\in [0,1]} \( \frac{1}{2}x^2W_{xx}(t,x)+(x+u)W_x(t,x)+W(t,x)+u \) \!\Big\}=0,\\
  \ns\ds\hskip11.8cm \ (t,x)\in[0,T]\times\dbR,\\
  \ns\ds\!\!\! W(T,x)=\Phi(x), \q x\in \dbR.
  \ea
 \right.
\ee
It is not difficult to verify  directly that the function $W(t,x)=xe^{2T-2t}\in C^{1,2}([0,T]\times\dbR)$ is the classical solution of \eqref{CEP-HJB}.
Therefore,
applying our first main result {\rm(Theorem \ref{SVT-class})}, from \eqref{OC-C},  $\bar u(\cd)\equiv 0$ is the  optimal control. In this case, the optimal trajectory   of Problem $(C)_{t,x}$  is
$$\bar X(s)=X^{t,x;\bar u}(s)=x\exp\Big\{\frac12(s-t)+B(s)-B(t)\Big\} ,\q s\in[t,T].$$
 For $x_0\in\dbR$, the optimal pair of Problem $(C)_{0,x_0}$ is $\displaystyle \(0,x_0\exp\Big\{\frac12s+B(s)\Big\}\)$, $s\in[0,T]$.
\end{example}

Next let's look at  the case with the viscosity solution.

\begin{example}\label{VEP}\sl
Given the control domain $U=[1,2]$.  For any   initial pair $(t,x)\in[0,T]\times\dbR^n$,  consider the following controlled system,
\bel{VEP-SDE}
 \left\{
  \ba{ll}
  \ns\ds\!\!\!dX(s)=X(s)u(s)ds+X(s)dB(s),\q s\in [t,T],\\
  \ns\ds\!\!\! X(t)=x,
  \ea
 \right.
\ee
and the following   RBSDE,
\bel{VEP-RBSDE}
 \left\{
  \ba{ll}
  \ns\ds\!\!\! Y(s)=X(T)-\int_t^T |Y(r)|dr-(K(T)-K(s))+\int_t^T Z(r)dB(r),\q s\in [t,T],\\
  \ns\ds\!\!\! Y(s)\les h(s,X(s)),\q \ae s\in [t,T],\\
  \ns\ds\!\!\! \int_t^T \big( h(r,X(r))-Y(r) \big) dK(r)=0,
  \ea
 \right.
\ee
where the obstacle function
$h(t,x)=\left\{
         \ba{ll}
         \ns\ds\!\!\! x,\q (t,x)\in[0,T]\times\dbR^+,\\
         \ns\ds\!\!\! 0,\q (t,x)\in[0,T]\times (\dbR^-\cup\{0\}).\\
         \ea
         \right.$
 Clearly,  \eqref{VEP-SDE} and \eqref{VEP-RBSDE} are well-posed. Therefore,  for $(t,x)\in[0,T]\times\dbR$, we define the cost functional  as follows,
$$J(t,x;u(\cd)):=Y(t).$$
Our control problem (denoted by Problem $(\dbC)_{t,x}$) is to minimize $ J(t,x;u(\cd))$ by selecting $u(\cd)\in U$, and its value function is $V(\cd,\cd) .$

Consider the following   obstacle problem of HJB  equation,  
\bel{VEP-HJB}
 \left\{
  \ba{ll}
  \ns\ds\!\!\!  \max\Big\{ W(t,x)- h(t,x), -\frac{\partial}{\partial t}W(t,x)-\inf_{u\in [1,2]} \( \frac{1}{2}x^2W_{xx}(t,x)+ xu W_x(t,x)-|W(t,x)| \) \Big\}=0,\\
  \ns\ds\hskip11.1cm \ (t,x)\in[0,T]\times\dbR,\\
  \ns\ds\!\!\! W(T,x)=\Phi(x), \q x\in \dbR.
  \ea
 \right.
\ee
Obviously, the following
\bel{Ex-vis}W(t,x)=\left\{
         \ba{ll}
         \ns\ds\!\!\! x,\hskip1.2cm \  (t,x)\in[0,T]\times\dbR^+,\\
         \ns\ds\!\!\! xe^{3T-3t},\q (t,x)\in[0,T]\times (\dbR^-\cup\{0\}),\\
         \ea
         \right.\ee
is not differentiable at $(t,0)$, for any $t\in[0,T]$.
Using the definition of viscosity solution (referring to \cite{BL-2011, WY-2008}),  we can check $W(\cd,\cd)$ in \eqref{Ex-vis} is  indeed a viscosity solution of \eqref{VEP-HJB}. Moreover, $W(\cd,\cd)\in C_p([0,T]\times \dbR)$ and   satisfies the conditions {\rm (D1)} and {\rm (D2)}.
%

%
Let us consider   an admissible control $\bar u(\cd)\equiv 1$ for the initial pair $(t,x)=(0,0)$. The corresponding trajectory  $\bar X(\cd)=X^{0,0;\bar u}(\cd)\equiv 0$. In this case,
$$ D^{1,2,+}_{t+,x}W\big(s, \bar X(s) \big)=[0,+\infty)\times[1,e^{3T-3s}]\times [0,+\infty),\q s\in[0,T].$$
By taking
$\big(\bar q(s), \bar p(s), \bar P(s) \big)=(0,1,0)\in D^{1,2,+}_{t+,x}W\big(s, \bar X(s) \big), $ $s\in[0,T],$
 it is easy to check that
$$\left\{\begin{aligned}
& \bar p(s) \cd\bar X(s)=0=\bar Z(s) , \ s\in [0,T];\\
&
  \dbE \int_t^T \( \bar q(s) + \frac{1}{2} \big( \bar X(s) \big) ^2 \bar P(s)+ \bar X(s) \bar p(s)-|\bar Y(s)| \)ds\les 0, \ s\in [0,T],\end{aligned}\right.$$
  which is in fact the conditions  (ii) and (iii) in Theorem \ref{SVT-VS}.
Note that in the above,  under $\bar u(\cd)$,  the solution $(\bar Y(\cd),\bar Z(\cd),\bar K(\cd))$ of  \eqref{VEP-RBSDE}  is $(0,0,0)$. Therefore, by   Theorem \ref{SVT-VS},
we get  $(\bar X(\cd),\bar u(\cd))$ is indeed the  optimal pair of Problem $(\dbC)_{0,0}$.

\end{example}

\end{document}